\providecommand{\U}[1]{\protect\rule{.1in}{.1in}}
\newtheorem{theorem}{Theorem}
{}
\newtheorem{acknowledgement}{Acknowledgement}
\newtheorem{corollary}{Corollary}
\newtheorem{definition}{Definition}
\newtheorem{notation}{Notation}
\newtheorem{proposition}{Proposition}
\newtheorem{remark}{Remark}
\newenvironment{proof}[1][Proof]{\textbf{#1.} }{\ \rule{0.5em}{0.5em}}
\begin{document}

\title{On the Band Functions and Bloch Functions.}
\author{O. A. Veliev\\{\small Dogus University, \ Istanbul, Turkey.}\ \\{\small e-mail: oveliev@dogus.edu.tr}}
\date{}
\maketitle

\begin{abstract}
In this paper, we consider the continuity of the band functions and Bloch
functions of the differential operators generated by the differential
expressions with periodic matrix coefficients.

Key Words: Differential operator, band functions, Bloch functions.

AMS Mathematics Subject Classification: 34L20, 47F05, 35 P15.

\end{abstract}

\section{Introduction}

In this paper, we consider the continuity of the Bloch eigenvalues, band
functions and Bloch functions with respect to the quasimomentum of the
differential operator $T$, generated in the space $L_{2}^{m}(\mathbb{R}^{d})$
by formally self-adjoint differential expression
\begin{equation}
Tu=%
{\textstyle\sum\limits_{\left\vert \alpha\right\vert =2s}}
Q_{\alpha}D_{\alpha}u+%
{\textstyle\sum\limits_{\left\vert \alpha\right\vert \leq2s-1}}
Q_{\alpha}(x)D_{\alpha}u+Bu, \tag{1}%
\end{equation}
where $d\geq1$, $\alpha=(\alpha_{1},\alpha_{2},...,\alpha_{d})$ is a
multi-indeks, $\left\vert \alpha\right\vert =\alpha_{1}+\alpha_{2}+\cdot
\cdot\cdot+\alpha_{d}$,
\[
D_{\alpha}=\left(  \frac{1}{i}\frac{\partial}{\partial x_{1}}\right)
^{\alpha_{1}}\left(  \frac{1}{i}\frac{\partial}{\partial x_{2}}\right)
^{\alpha_{2}}...\left(  \frac{1}{i}\frac{\partial}{\partial x_{d}}\right)
^{\alpha_{d}},
\]
$Q_{\alpha}$, for each $\alpha$, is an $m\times m$ matrix. Here the entries of
$Q_{\alpha}(x)$, for $\left\vert \alpha\right\vert \leq2s-1$, are bounded
functions that are periodic with respect to a lattice $\Omega$, the entries of
$Q_{\alpha}$, for $\left\vert \alpha\right\vert =2s$, are real numbers and $B$
is a bounded operator commuting with the shift operators $S_{\omega
}:u(x)\rightarrow u(x+\omega)$, for $\omega\in\Omega$. Note that $L_{2}%
^{m}(\mathbb{R}^{d})$ is the space of the vector-valued functions $f=\left(
f_{1},f_{2},...,f_{m}\right)  $ with $f_{k}\in L_{2}(\mathbb{R}^{d})$, for
$k=1,2,...,m$.

To describe briefly the scheme of this paper, let us introduce the following
notations. Let $\Gamma$ be the lattice dual to $\Omega$. Denote by $F$ and
$F^{\star}$ the fundamental domains of the lattices $\Omega$ and $\Gamma$,
respectively. Let$\ T_{t}$ be the operator generated in $L_{2}^{m}(F)$ by (1)
and the quasiperiodic conditions%
\begin{equation}
u(x+\omega)=e^{i\left\langle t,\omega\right\rangle }u(x),\ \forall\omega
\in\Omega, \tag{2}%
\end{equation}
where $t\in F^{\star}$ and $\left\langle \cdot,\cdot\right\rangle $ is the
inner product in $\mathbb{R}^{d}$. Denote by $SBC(H)$ the set of below-bounded
self-adjoint operators, with compact resolvents acting in the Hilbert space
$H$. If $\ $%
\begin{equation}
T_{t}\in SBC(L_{2}^{m}(F)), \tag{3}%
\end{equation}
then the spectrum of $T_{t}$ consists of the eigenvalues. Let $\lambda
_{1}(t),\lambda_{2}(t),...$ be the eigenvalues of $T_{t}$ numerated in the
nondecreasing order
\begin{equation}
\lambda_{1}(t)\leq\lambda_{2}(t)\leq\cdot\cdot\cdot. \tag{4}%
\end{equation}
The eigenvalues and eigenfunctions of $T_{t}$, for $t\in F^{\star}$, are
called the Bloch eigenvalues and Bloch functions of $T$, respectively. The
function $\lambda_{n}:F^{\star}\rightarrow\mathbb{R}$ is said to be the $n$-th
band function of $T$.

To investigate the continuity of the band functions and Bloch functions of
$T$, first we study the continuity of the eigenvalues and eigenfunctions for
the family $\left\{  A_{t}:t\in E\right\}  $ of the operators $A_{t}\in
SBC(H)$, where $E$ is a metric space and $A_{t}$ continuously depends on $t\in
E$ in the generalized sense defined in [2, Chap. 4, page 202]. For the
simplicity of the notations, the parameter of the family $\left\{  A_{t}:t\in
E\right\}  $ is also denoted by $t$, the eigenvalues of $A_{t}$ also are
denoted by $\lambda_{1}(t),\lambda_{2}(t),...$ and are numerated in the
nondecreasing order as in the case $T_{t}$ (see (4)). In Theorems 1-3, we
prove that the eigenvalue $\lambda_{n}(t)$ and corresponding eigenspace of
$A_{t}$ continuously depend on $t\in E$. Moreover, we prove that, if
$\lambda_{n}(t_{0})$ is a simple eigenvalue, then the eigenvalues $\lambda
_{n}(t)$ are simple in some neighborhood of $t_{0}$ and the corresponding
normalized eigenfunctions $\Psi_{n,t}$ of $A_{t}$ can be chosen so that
$\left\Vert \Psi_{n,t}-\Psi_{n,t_{0}}\right\Vert \rightarrow0$ as
$t\rightarrow t_{0}$, where $\left\Vert \cdot\right\Vert $ is the norm of $H$.
Then, we use these results for the family $\left\{  T_{t}:t\in F^{\star
}\right\}  $ of the operators $T_{t}.$ Namely, in Theorem 4 we prove that, if
(3) and an additional condition (10) hold, then $T_{t}$ continuously depends
on $t\in F^{\star}$ in the generalized sense and hence the results obtained
for $A_{t}$ continue to hold for $T_{t}$. In order not to deviate from the
purpose of this paper, we do not discuss the conditions on (1) under which (3)
and (10) hold, since the consideration of these conditions is long technical.
Nevertheless, in the end of this paper (see Corollary 1), we give an example
which shows that the obtained results include the continuity of the band
functions and Bloch functions of a large class of the differential operators
generated by (1).

Note that, as was noted in the physical and mathematical literatures (see for
example the books [1, 3, 5] and paper [4]), the continuity of the band
functions of the Schr\"{o}dinger operator with a periodic potential and some
other periodic operators is well-known or immediately follows from the
perturbation theory described in [2]. Here we do not discuss all the numerous
investigations about this, and give only the detailed proof of the continuity
of the band functions and Bloch functions for the large class of the systems
of differential operators by using the generalized convergence in the sense of
[2]. Finally, we note that, using the results of [Chapter 13 of [3]] in a
standard way, one can easily verify that $T$\ can be expressed as the direct
integral of the operators $T_{t}$, for $t\in F^{\star}$ and that the spectrum
of the operator $T$ is the union of the spectra of the operators $T_{t}$ for
$t\in F^{\star}$.

\section{Main Results}

First, we consider the general operators by using the generalized convergence
of the closed operators defined in [2, Chap. 4, page 202]. Let us state the
definition of the generalized convergence in the notation which is suitable
for this investigation. Let $A$ and $B$ be self-adjoint operators in the
Hilbert space $H$. Since\ the self-adjoint operators are closed, the graphs
$G(A)=\left\{  \left(  u,Au\right)  :u\in D(A)\right\}  $ and $G(B)=\left\{
\left(  u,Bu\right)  :u\in D(B)\right\}  $ of these operators are closed
linear manifolds of the Hilbert space $H^{2}$. Let $S_{A}$ and $S_{B}$ be
respectively the unit spheres of $G(A)$ and $G(B)$. The gap $g(T,S)$ between
the self-adjoint operators $A$ and $B$ is defined as follows:
\begin{equation}
g(A,B)=\max\left\{  \sup_{v\in S_{A}}\left(  \inf_{w\in G(B)}\left\Vert
w-v\right\Vert \right)  ,\sup_{v\in S_{B}}\left(  \inf_{w\in G(A)}\left\Vert
w-v\right\Vert \right)  \right\}  . \tag{5}%
\end{equation}
We use the following definitions and theorems of [2].

\begin{definition}
(See [2, Chap. 4, page 202]). We say that, the sequence of the closed
operators $A_{n}$ converges to the closed operator $A$ in the generalized
sense, if $g(A_{n},A)\rightarrow0$ as $n\rightarrow\infty$.
\end{definition}

This convergence determines the following definition of the continuity.

\begin{definition}
We say that, a family of the closed operators $A_{t}$ is continuous at $E$ in
the generalized sense, if for each $t_{0}\in E$ and for any sequence $\left\{
t_{n}\right\}  \subset E$ converging to $t_{0}$, $g(A_{t_{n}},A_{t_{0}%
})\rightarrow0$ as $n\rightarrow\infty$.
\end{definition}

Besides these definitions, we use Theorems 3.1 and 3.16 of [2] (see Chap. 4,
pages 208 and 212). Let us formulate these theorems in the suitable form by
using the above notations. Theorem 3.1 of [2] states that if the closed curve
$\gamma$ belongs to the resolvent set $\rho(A)$ of $A\in SBC(H),$ then there
exists $\delta>0$ such that $\gamma\in\rho(B)$, for any operator $B\in SBC(H)$
with $g(A,B)<\delta$. Theorem 3.16 of [2] states that, in this case the
numbers of the eigenvalues (counting the multiplicity) of $A$ and $B$ lying
inside $\gamma$ are the same. Moreover,
\begin{equation}
\text{ }\left\Vert \int_{\gamma}(A-\lambda I)^{-1}d\lambda-\int_{\gamma
}(B-\lambda I)^{-1}d\lambda\right\Vert \rightarrow0\text{,} \tag{6}%
\end{equation}
as $g(A,B)\rightarrow0$.

Now using these statements, we consider a continuous family $\left\{  A_{t}\in
SBC(H):t\in E\right\}  $ of the operators by using the following notations.

\begin{notation}
In (4), the eigenvalues of the operator $A_{t_{0}}$ are denoted by counting
the multiplicity. Let us denote by $\mu_{1}(t_{0}),\mu_{2}(t_{0}),...$ the
eigenvalues of $A_{t_{0}}$ without counting the multiplicity. In other words,
$\mu_{1}(t_{0})<\mu_{2}(t_{0})<\cdot\cdot\cdot$ are the eigenvalues of
$A_{t_{0}}$ with the multiplicities $k_{1},k_{2},...$, respectively. Since
$\lambda_{n}(t_{0})\rightarrow\infty$ as $n\rightarrow\infty$, for each $n$
there exists $p$ such that $n\leq k_{1}+k_{2}+\cdot\cdot\cdot+k_{p}$. This
notation, together with the notation (4) implies that
\[
\lambda_{1}(t_{0})=\lambda_{2}(t_{0})=\cdot\cdot\cdot=\lambda_{s_{1}}%
(t_{0})=\mu_{1}(t_{0}),
\]%
\[
\lambda_{s_{1}+1}(t_{0})=\lambda_{s_{1}+2}(t_{0})=\cdot\cdot\cdot
=\lambda_{s_{2}}(t_{0})=\mu_{2}(t_{0}),
\]
and
\[
\lambda_{s_{p-1}+1}(t_{0})=\lambda_{s_{p-1}+2}(t_{0})=\cdot\cdot\cdot
=\lambda_{s_{p}}(t_{0})=\mu_{p}(t_{0}),
\]
where $s_{p}=k_{1}+k_{2}+\cdot\cdot\cdot+k_{p}$ and $n\leq s_{p}$.
\end{notation}

Now we prove the following theorems for the continuous family $\left\{
A_{t}\in SBC(H):t\in E\right\}  $.

\begin{theorem}
Let $\left\{  A_{t}:t\in E\right\}  $ be the continuous family of the
operators $A_{t}\in SBC(H)$, $t_{0}\in E$ and $\left\{  t_{k}\in
E:k\in\mathbb{N}\right\}  $ be a sequence converging to $t_{0}.$ Then, for
every $r>0$ satisfying the inequality
\begin{equation}
r<\frac{1}{2}\min_{j=1,2,...p}\left(  \mu_{j+1}(t_{0})-\mu_{j}(t_{0})\right)
, \tag{7}%
\end{equation}
there exists $N>0$ such that, each of the operators $A_{t_{k}}$, for $k>N$,
has\ $k_{j}$ eigenvalues in the interval $\left(  \mu_{j}(t_{0})-r,\mu
_{j}(t_{0})+r\right)  $, where $j=1,2,...,p$, the numbers $p$ and $\mu
_{j}(t_{0})$ are defined in Notation 1. Moreover, the eigenvalues lying in
$\left(  \mu_{j}(t_{0})-r,\mu_{j}(t_{0})+r\right)  $ are $\lambda_{s_{j-1}%
+1}(t_{k}),\lambda_{s_{j-1}+2}(t_{k}),...,\lambda_{s_{j}}(t_{k})$, where
$s_{0}=0$ and $s_{j}$'s, for $j\geq1$, are defined in Notation 1.
\end{theorem}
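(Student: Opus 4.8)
The plan is to derive everything from the two cited theorems of Kato (Theorem 3.1 and Theorem 3.16 of [2], as restated in the excerpt), applied to a judiciously chosen family of contours. First I would fix $r>0$ satisfying (7); this ensures that the closed circles $\gamma_j$ of radius $r$ centered at $\mu_j(t_0)$, for $j=1,2,\dots,p$, are pairwise disjoint and each $\gamma_j$ encloses exactly the eigenvalue $\mu_j(t_0)$ of $A_{t_0}$, with multiplicity $k_j$. Since $A_{t_0}\in SBC(H)$ has discrete spectrum, each $\gamma_j$ lies in the resolvent set $\rho(A_{t_0})$. Also, one should add one further contour to control what happens \emph{below} the spectrum: let $\gamma_0$ be a circle (or a fixed rectifiable closed curve) enclosing exactly the first $s_p$ eigenvalues $\lambda_1(t_0),\dots,\lambda_{s_p}(t_0)=\mu_1(t_0),\dots,\mu_p(t_0)$ and nothing else — this is possible because $A_{t_0}$ is below-bounded. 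Then $\gamma_0,\gamma_1,\dots,\gamma_p\subset\rho(A_{t_0})$.

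Next I would invoke Theorem 3.1 of [2]: for each of the finitely many curves $\gamma_0,\gamma_1,\dots,\gamma_p$ there is a $\delta_j>0$ such that $\gamma_j\subset\rho(B)$ for every $B\in SBC(H)$ with $g(A_{t_0},B)<\delta_j$. Put $\delta=\min_{0\le j\le p}\delta_j>0$. By continuity of the family in the generalized sense (Definition 2) and since $t_k\to t_0$, there is $N$ such that $g(A_{t_k},A_{t_0})<\delta$ for all $k>N$. For such $k$, all curves $\gamma_j$ lie in $\rho(A_{t_k})$, and by Theorem 3.16 of [2] the number of eigenvalues of $A_{t_k}$ (with multiplicity) inside $\gamma_j$ equals that of $A_{t_0}$: namely $k_j$ inside each $\gamma_j$ for $j\ge1$, and $s_p=k_1+\dots+k_p$ inside $\gamma_0$. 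Since $\gamma_j\subset\{z:|z-\mu_j(t_0)|=r\}$, "inside $\gamma_j$" means exactly the open interval $(\mu_j(t_0)-r,\mu_j(t_0)+r)$ on the real axis (the operators being self-adjoint, all eigenvalues are real), giving the first assertion.

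For the "moreover" part I would argue by counting in nondecreasing order. The intervals $I_j:=(\mu_j(t_0)-r,\mu_j(t_0)+r)$ are pairwise disjoint and ordered: $\sup I_j<\inf I_{j+1}$ by (7). For $k>N$ the operator $A_{t_k}$ has exactly $k_j$ eigenvalues in $I_j$ and exactly $s_p=\sum_{j=1}^p k_j$ eigenvalues inside $\gamma_0$; since $\bigcup_{j=1}^p I_j$ is contained in the region enclosed by $\gamma_0$ and already accounts for $s_p$ eigenvalues, $A_{t_k}$ has \emph{no} eigenvalues inside $\gamma_0$ outside $\bigcup_j I_j$, and in particular none below $\mu_1(t_0)-r$. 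Hence the $s_p$ smallest eigenvalues of $A_{t_k}$, i.e. $\lambda_1(t_k),\dots,\lambda_{s_p}(t_k)$, are distributed among the $I_j$; because the $I_j$ are increasingly ordered and contain $k_1,k_2,\dots,k_p$ of them respectively, the nondecreasing enumeration forces $\lambda_{s_{j-1}+1}(t_k),\dots,\lambda_{s_j}(t_k)\in I_j$ for each $j$, which is the claim (with $s_0=0$).

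The main obstacle I anticipate is not any single deep step — Kato's theorems do the heavy lifting — but the bookkeeping needed to rule out "stray" eigenvalues of $A_{t_k}$ between or below the intervals $I_j$; this is exactly why the auxiliary contour $\gamma_0$ enclosing the first $s_p$ eigenvalues is essential, and one must make sure $\gamma_0$ can be drawn disjoint from the $\gamma_j$ while still enclosing precisely $\mu_1(t_0),\dots,\mu_p(t_0)$. A minor point to handle carefully is that $\gamma_0$ need not be a circle; using that $A_{t_0}$ is below-bounded with the gap between $\mu_p(t_0)$ and $\mu_{p+1}(t_0)$ positive, one can take $\gamma_0$ to be, say, a large circle or a rectangle crossing the real axis below $\inf\sigma(A_{t_0})$ and between $\mu_p(t_0)+r$ and $\mu_{p+1}(t_0)$, which still lies in $\rho(A_{t_0})$, so Theorem 3.1 of [2] applies verbatim.
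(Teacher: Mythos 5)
Your overall route is the same as the paper's: fix the disjoint circles of radius $r$ about the points $\mu_j(t_0)$, invoke Theorem 3.1 of [2] to place them in $\rho(A_{t_k})$ for large $k$, and Theorem 3.16 of [2] to count $k_j$ eigenvalues inside each; this gives the first assertion exactly as in the paper. Where you differ is the bookkeeping for the ``moreover'' part: the paper rules out stray eigenvalues by asserting, ``in the same way,'' that $A_{t_k}$ has none in $[\mu_j(t_0)+r,\mu_{j+1}(t_0)-r]$ and none in $(-\infty,\mu_1(t_0)-r]$, whereas you introduce a single auxiliary contour $\gamma_0$ enclosing the first $s_p$ eigenvalues of $A_{t_0}$ and argue by counting. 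For the bounded gaps between consecutive $\mu_j$'s the two devices are equivalent and both work.

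Neither device, however, closes the real hole, which sits at $-\infty$. Your counting shows that $A_{t_k}$ has no eigenvalues \emph{inside} $\gamma_0$ beyond the $s_p$ accounted for by the intervals $I_j$; it does not exclude eigenvalues of $A_{t_k}$ lying to the left of the point where $\gamma_0$ crosses the real axis, and no closed curve can enclose the half-line $(-\infty,\mu_1(t_0)-r]$. This is not a pedantic point: generalized (gap) convergence of operators in $SBC(H)$ does not by itself prevent spectrum from appearing near $-\infty$. For instance, on $\ell^2$ let $A_{t_0}=\mathrm{diag}(1,2,3,\dots)$ and let $A_{t_k}$ be obtained by replacing the $k$-th diagonal entry by $-k^{2}$; a direct computation of (5) gives $g(A_{t_k},A_{t_0})\le (k+k^{2})\big((1+k^{2})(1+k^{4})\big)^{-1/2}\to 0$, yet $\lambda_1(t_k)=-k^{2}\to-\infty$, so the indexing claim fails already for $j=1$. (The same example shows that the paper's ``in the same way'' is not actually available for the unbounded interval, so you are no worse off than the source.) To repair the step one needs an extra input --- for example a lower bound for the family $\{A_{t_k}\}$ that is uniform in $k$, which is available in the paper's intended application because of (11) and (27) --- after which your $\gamma_0$ can be chosen to enclose all of $[\inf_k\inf\sigma(A_{t_k})-1,\ \mu_p(t_0)+r]$ and your counting argument becomes complete.
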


\begin{proof}
By (7), Definition 2 and Theorem 3.1 of [2], there exists $N>0$ such that the
circle $D_{j}(r)=\left\{  z\in\mathbb{C}:\left\vert z-\mu_{j}(t_{0}%
)\right\vert =r\right\}  $ belongs to the resolvent set of the operators
$A_{t_{k}}$, for $k>N$. Therefore,\ by Theorem 3.16 of [2], $A_{t_{k}}$ has
$k_{j}$ eigenvalues inside $D_{j}$. In the same way, we prove that $A_{t_{k}}%
$, for $k>N$, has no eigenvalues in the intervals $(-\infty,\mu_{1}(t_{0})-r]$
and $\left[  \mu_{j}(t_{0})+r,\mu_{j+1}(t_{0})-r\right]  $, for $j=1,2,...,p$,
since $A_{t_{0}}$ has no eigenvalues in those intervals. Therefore, the
eigenvalues of $A_{t_{k}}$, for $k>N$, lying in $\left(  \mu_{j}(t_{0}%
)-r,\mu_{j}(t_{0})+r\right)  $ are $\lambda_{s_{j-1}+1}(t),\lambda_{s_{j-1}%
+2}(t),...,\lambda_{s_{j}}(t)$, for $j=1,2,...,p$.
\end{proof}

Now, we are ready to prove the main results for this continuous family.

\begin{theorem}
Let $\left\{  A_{t}:t\in E\right\}  $ be the continuous family of the
operators $A_{t}\in SBC(H)$. Then, the eigenvalues (4) of $A_{t}$ continuously
depend on $t\in E$.
\end{theorem}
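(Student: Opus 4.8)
The plan is to derive Theorem~2 as a direct consequence of Theorem~1 together with Notation~1. Fix $t_0\in E$ and an index $n$. I would first invoke Notation~1 to locate $\lambda_n(t_0)$ among the distinct eigenvalues: there is a unique $j$ with $s_{j-1}<n\leq s_j$, so that $\lambda_n(t_0)=\mu_j(t_0)$. To prove continuity of $\lambda_n$ at $t_0$ it suffices, since $E$ is a metric space, to show that $\lambda_n(t_k)\to\lambda_n(t_0)$ for every sequence $\{t_k\}\subset E$ with $t_k\to t_0$.

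So let such a sequence be given, and let $\varepsilon>0$ be arbitrary. Choose $r>0$ small enough that $r<\varepsilon$ and that $r$ satisfies the separation inequality (7). Applying Theorem~1 to this $r$ yields an $N$ such that for all $k>N$ and all $j'=1,2,\dots,p$, the operator $A_{t_k}$ has exactly $k_{j'}$ eigenvalues in $(\mu_{j'}(t_0)-r,\mu_{j'}(t_0)+r)$, and those eigenvalues are precisely $\lambda_{s_{j'-1}+1}(t_k),\dots,\lambda_{s_{j'}}(t_k)$. In particular, for the index $j$ attached to our fixed $n$, and for $k>N$, the eigenvalue $\lambda_n(t_k)$ lies in $(\mu_j(t_0)-r,\mu_j(t_0)+r)$, hence
\[
|\lambda_n(t_k)-\lambda_n(t_0)|=|\lambda_n(t_k)-\mu_j(t_0)|<r<\varepsilon .
\]
Since $\varepsilon>0$ was arbitrary, $\lambda_n(t_k)\to\lambda_n(t_0)$, and since the sequence was arbitrary, $\lambda_n$ is continuous at $t_0$. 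As $t_0\in E$ and $n$ were arbitrary, every band function $\lambda_n$ is continuous on $E$.

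The only genuine subtlety — and the place where one must be slightly careful rather than where real difficulty lies — is the bookkeeping in the first paragraph: one must check that the index $j$ is well defined (i.e. that $n$ falls into exactly one block $(s_{j-1},s_j]$, with the convention $s_0=0$), and that the labeling in Theorem~1 genuinely forces $\lambda_n(t_k)$ into the $j$-th interval rather than a neighboring one. This is exactly what the last sentence of Theorem~1 asserts, so no new work is needed; the proof is essentially a matching of indices. A minor point worth stating explicitly is that one is free to shrink $r$ below any prescribed $\varepsilon$, since (7) is an upper bound on $r$ and the conclusion of Theorem~1 holds for every admissible $r$; this is what converts the fixed-radius localization of Theorem~1 into genuine convergence.
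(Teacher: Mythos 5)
Your proposal is correct and follows essentially the same route as the paper: both deduce the result directly from Theorem~1 and Notation~1 by locating $n$ in the block $(s_{j-1},s_j]$ so that $\lambda_n(t_0)=\mu_j(t_0)$, and then letting the admissible radius $r$ tend to zero (the paper uses a sequence $r_s\rightarrow0$ where you use an arbitrary $\varepsilon$, which is the same argument). No substantive difference.
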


\begin{proof}
Consider the sequence $r_{s}\rightarrow0$. By Theorem 1, there exists $N_{s}$
such that, if $k>N_{s}$ then the eigenvalues $\lambda_{s_{j-1}+1}%
(t_{k}),\lambda_{s_{j-1}+2}(t_{k}),...,\lambda_{s_{j}}(t_{k})$ lie in $\left(
\mu_{j}(t_{0})-r_{s},\mu_{j}(t_{0})+r_{s}\right)  $. Therefore, we have
$\left\vert \lambda_{n}(t_{k})-\lambda_{n}(t_{0})\right\vert <r_{s}$, for
$k>N_{s}$, since $n\in\lbrack s_{j-1}+1,s_{j}]$ and $\lambda_{n}(t_{0}%
)=\mu_{j}(t_{0})$ due to Notation 1. Now letting $s$ tend to infinity, we
obtain that $\lambda_{n}(t_{k})\rightarrow\lambda_{n}(t_{0})$ as
$k\rightarrow\infty$, for any sequence $\left\{  t_{k}\in E:k\in
\mathbb{N}\right\}  $ converging to $t_{0}$. Thus $\lambda_{n}$ is continuous
at $t_{0}$.
\end{proof}

Now, suppose that $\lambda_{n}(t_{0})$ is a simple eigenvalue. Then, by
Theorem 1, for every $r>0$ satisfying (7), there exists $N>0$ such that the
operator $A_{t_{k}}$, for $k>N$, has a unique eigenvalue (counting the
multiplicity) in the interval $\left(  \lambda_{n}(t_{0})-r,\lambda_{n}%
(t_{0})+r\right)  $ and this eigenvalue is $\lambda_{n}(t_{k})$. Since
$\lambda_{n}(t_{k})$ is a simple eigenvalue, we have%
\begin{equation}
\int_{\gamma}(A_{t_{k}}-\lambda I)^{-1}fd\lambda=(f,\Psi_{n,t_{k}}%
)\Psi_{n,t_{k}}, \tag{8}%
\end{equation}
for $f\in H$, $k=0$, and $k>N$, where $\gamma$ is a closed curve that encloses
only the eigenvalue $\lambda_{n}(t_{k})$ and $\Psi_{n,t_{k}}$ is a normalized
eigenfunction corresponding to $\lambda_{n}(t_{k})$. Using (6) and (8), we
obtain the following relations:
\[
(f,\Psi_{n,t_{k}})\Psi_{n,t_{k}}\rightarrow(f,\Psi_{n,t_{0}})\Psi_{n,t_{0}}%
\]
as $t_{k}\rightarrow t_{0}$, for each $f\in H$. Here replacing $f$ by
$\Psi_{n,t_{0}}$, we obtain
\begin{equation}
(\Psi_{n,t_{0}},\Psi_{n,t_{k}})\Psi_{n,t_{k}}\rightarrow\Psi_{n,t_{0}} \tag{9}%
\end{equation}
and $\left\vert (\Psi_{n,t_{0}},\Psi_{n,t_{k}})\right\vert \rightarrow1$ as
$t_{k}\rightarrow t_{0}$. Since a normalized eigenfunction is still normalized
if it is multiplied by a factor of absolute value $1$, $\Psi_{k,t_{k}}$ and
$\Psi_{k,t_{0}}$ can be chosen so that
\begin{equation}
\arg(\Psi_{n,t_{0}},\Psi_{n,t_{k}})=0, \tag{10}%
\end{equation}
that is, $\left\vert (\Psi_{n,t_{0}},\Psi_{n,t_{k}})\right\vert =(\Psi
_{n,t_{0}},\Psi_{n,t_{k}})$. Thus
\[
(\Psi_{n,t_{0}},\Psi_{n,t_{k}})\rightarrow1,
\]
as $t_{k}\rightarrow t_{0}$. This, together with (9) implies that
\[
\left\Vert \Psi_{n,t_{k}}-\Psi_{n,t_{0}}\right\Vert \leq\left\Vert
(1-(\Psi_{n,t_{0}},\Psi_{n,t_{k}}))\Psi_{n,t_{k}}\right\Vert +\left\Vert
(\Psi_{n,t_{0}},\Psi_{n,t_{k}})\Psi_{n,t_{k}}-\Psi_{n,t_{0}}\right\Vert
\rightarrow0,
\]
as $t\rightarrow t_{0}$. It means that, $\Psi_{n,t}$ is continuous at $t_{0}$.
Thus, we have

\begin{theorem}
Let $\left\{  A_{t}:t\in E\right\}  $ be the continuous family of the
operators $A_{t}\in SBC(H)$ and $t_{0}\in E$. If $\lambda_{n}(t_{0})$ is a
simple eigenvalue, then the eigenvalues $\lambda_{n}(t)$ are simple in some
neighborhood of $t_{0}$ and the corresponding normalized eigenfunctions
$\Psi_{n,t}$ can be chosen so that $\left\Vert \Psi_{n,t}-\Psi_{n,t_{0}%
}\right\Vert \rightarrow0$ as $t\rightarrow t_{0}$.
\end{theorem}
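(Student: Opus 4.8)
The plan is to localize the simple eigenvalue $\lambda_{n}(t_{0})$ inside a small circle, to feed the norm convergence of resolvent integrals from (6) into that picture, and then to turn the convergence of the (canonically defined) rank-one Riesz projections into the convergence of suitably phased eigenfunctions. Theorems 1--2 and Theorems 3.1 and 3.16 of [2] together with (6) are the only inputs I will need.

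First I would fix $r>0$ satisfying (7). Because $\lambda_{n}(t_{0})$ is simple, the interval $\left(  \lambda_{n}(t_{0})-r,\lambda_{n}(t_{0})+r\right)  $ contains exactly one eigenvalue of $A_{t_{0}}$, of multiplicity one; that is, in Notation 1 the index $j$ with $\mu_{j}(t_{0})=\lambda_{n}(t_{0})$ has $k_{j}=1$ and $s_{j-1}+1=s_{j}=n$. Applying Theorem 1 to an arbitrary sequence $t_{k}\to t_{0}$, there is $N$ such that for $k>N$ the operator $A_{t_{k}}$ has exactly one eigenvalue (counted with multiplicity) in that interval, namely $\lambda_{n}(t_{k})$, which is therefore simple. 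The neighborhood form of the simplicity assertion then follows by a routine contradiction against the sequential definition of generalized continuity: if there were points arbitrarily near $t_{0}$ at which $\lambda_{n}$ failed to be simple, one could extract a sequence violating the statement just proved.

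Next I would take $\gamma$ to be the circle $\left\{  z:\left\vert z-\lambda_{n}(t_{0})\right\vert =r\right\}  $. By Theorem 3.1 of [2] it lies in $\rho(A_{t_{k}})$ for $k>N$ and it encloses only $\lambda_{n}(t_{k})$; since that eigenvalue is simple, the associated Riesz projection is rank one, so $\int_{\gamma}(A_{t_{k}}-\lambda I)^{-1}f\,d\lambda=(f,\Psi_{n,t_{k}})\Psi_{n,t_{k}}$ for every $f\in H$ and every $k>N$ as well as $k=0$, where $\Psi_{n,t_{k}}$ is a normalized eigenfunction for $\lambda_{n}(t_{k})$. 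By (6), $\int_{\gamma}(A_{t_{k}}-\lambda I)^{-1}\,d\lambda\to\int_{\gamma}(A_{t_{0}}-\lambda I)^{-1}\,d\lambda$ in operator norm as $g(A_{t_{k}},A_{t_{0}})\to0$, which holds because the family is continuous in the generalized sense. Evaluating at $f=\Psi_{n,t_{0}}$ yields $(\Psi_{n,t_{0}},\Psi_{n,t_{k}})\Psi_{n,t_{k}}\to\Psi_{n,t_{0}}$, and taking norms, $\left\vert (\Psi_{n,t_{0}},\Psi_{n,t_{k}})\right\vert \to1$.

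The one genuine obstacle is that each $\Psi_{n,t_{k}}$ is determined only up to a unimodular factor, so the unit vectors cannot converge without a normalization choice; I would remove this freedom by replacing $\Psi_{n,t_{k}}$ with a unimodular multiple so that $\arg(\Psi_{n,t_{0}},\Psi_{n,t_{k}})=0$, which is legitimate for $k>N$ since the inner product is then nonzero, whence $(\Psi_{n,t_{0}},\Psi_{n,t_{k}})\to1$. The triangle inequality
\[
\left\Vert \Psi_{n,t_{k}}-\Psi_{n,t_{0}}\right\Vert \leq\left\vert 1-(\Psi_{n,t_{0}},\Psi_{n,t_{k}})\right\vert +\left\Vert (\Psi_{n,t_{0}},\Psi_{n,t_{k}})\Psi_{n,t_{k}}-\Psi_{n,t_{0}}\right\Vert
\]
then shows both terms tend to $0$, so $\left\Vert \Psi_{n,t_{k}}-\Psi_{n,t_{0}}\right\Vert \to0$; since the sequence $t_{k}\to t_{0}$ was arbitrary, $\Psi_{n,t}$ is continuous at $t_{0}$.
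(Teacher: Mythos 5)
Your proposal is correct and follows essentially the same route as the paper's own argument: localize the simple eigenvalue via Theorem 1, use the rank-one Riesz projection formula together with the norm convergence in (6), fix the phase so that $\arg(\Psi_{n,t_{0}},\Psi_{n,t_{k}})=0$, and conclude with the same triangle inequality. The only additions are minor elaborations (justifying that the phase choice is possible because the inner product is eventually nonzero, and spelling out the neighborhood form of simplicity), which the paper leaves implicit.
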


Now, we consider the application of Theorems 1-3 to the differential operators
$T_{t}$ for $t\in F^{\star}$ defined in $L_{2}^{m}(F)$ by (1) and (2), where
$F$ and $F^{\star}$ is the fundamental domains of the lattices $\Omega$ and
$\Gamma.$ Without loss of generality, we assume that the measure of the
fundamental domain $F$ of the lattice $\Omega$ is $1$.

\begin{theorem}
Suppose that (3) holds and there exist $\varepsilon>0$ and $c>0$ such that
\begin{equation}
\left\Vert Tu+cu-Bu\right\Vert +\left\Vert u\right\Vert \geq\varepsilon%
{\textstyle\sum\limits_{\left\vert \alpha\right\vert \leq2s-1}}
\left\Vert D_{\alpha}u\right\Vert , \tag{11}%
\end{equation}
for all $u\in D(T_{t})$ and $t\in F^{\ast}$, where $D(T_{t})$ is the domain of
definition of $T_{t}$, $Tu$ and $B$ are defined in (1). Then $\left\{
T_{t}:t\in F^{\ast}\right\}  $ is a continuous family of the operators in the
sense of Definition 2 and hence Theorems 1-3 continue to hold for this family.
\end{theorem}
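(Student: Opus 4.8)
The key difficulty is that the operators $T_t$ live on $t$-dependent domains — they are distinguished solely by the quasiperiodic condition (2) — so one cannot compare them, or their graphs, directly; hypothesis (11) is there precisely to make this comparison possible. The plan is: (i) eliminate the domain dependence by a gauge transformation; (ii) use (11) to show that the resulting family, now on a fixed domain, is a small relatively bounded perturbation of $\widetilde T_{t_0}$, hence gap-convergent; (iii) transfer this back to the $T_t$. For (i), let $U_t$ be the multiplication operator $(U_tv)(x)=e^{i\langle t,x\rangle}v(x)$, which is unitary on $L_2^m(F)$; differentiating (2) shows that $u\in D(T_t)$ if and only if $u=U_tv$ with $v$ in the fixed domain $\mathcal D$ of $H^{2s}$-functions satisfying \emph{periodic} conditions, so $U_t$ carries $\mathcal D$ onto $D(T_t)$. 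Because $e^{i\langle t,x\rangle}$ is scalar it commutes with every $Q_\alpha$ and $Q_\alpha(x)$, and $D_\alpha\bigl(e^{i\langle t,x\rangle}v\bigr)=e^{i\langle t,x\rangle}(D+t)^\alpha v$, whence
\[
\widetilde T_t:=U_t^{-1}T_tU_t=\sum_{|\alpha|=2s}Q_\alpha(D+t)^\alpha+\sum_{|\alpha|\le 2s-1}Q_\alpha(x)(D+t)^\alpha+\widetilde B_t,\qquad D(\widetilde T_t)=\mathcal D,
\]
with $\widetilde B_t:=U_t^{-1}BU_t$ bounded and $\|\widetilde B_t\|=\|B\|$; by (3) and unitary equivalence $\widetilde T_t\in SBC(L_2^m(F))$, so in particular $\widetilde T_{t_0}$ is closed with the $t$-independent domain $\mathcal D$. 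Assuming, as one may, that $F$ is bounded, it then suffices to prove $g(T_{t_k},T_{t_0})\to0$ for an arbitrary sequence $t_k\to t_0$ in $F^\ast$.

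The crux is step (ii), the only place (11) is used. Substituting $u=U_tv$ ($v\in\mathcal D$) into (11) and using $\|U_tw\|=\|w\|$, $\|D_\alpha(U_tv)\|=\|(D+t)^\alpha v\|$ and $\|\widetilde B_t\|=\|B\|$, one obtains a constant $c_0=c_0(\varepsilon,c,\|B\|)$ with $\sum_{|\alpha|\le 2s-1}\|(D+t)^\alpha v\|\le c_0\bigl(\|\widetilde T_tv\|+\|v\|\bigr)$ for all $v\in\mathcal D$, $t\in F^\ast$; taking $t=t_0$ dominates the lower-order derivatives by $\widetilde T_{t_0}$. Now for $|\alpha|\le 2s$ the operator $(D+t)^\alpha-(D+t_0)^\alpha$ has order $\le 2s-1$ with coefficients polynomial in $(t,t_0)$ vanishing at $t=t_0$, hence $O(|t-t_0|)$ near $t_0$; moreover $\|\widetilde B_t-\widetilde B_{t_0}\|\le 2\|B\|\,\|U_t-U_{t_0}\|\le 2\|B\|\,|t-t_0|\sup_{x\in F}|x|$, and the seminorms $v\mapsto\sum_{|\alpha|\le 2s-1}\|(D+t)^\alpha v\|$ and $v\mapsto\sum_{|\alpha|\le 2s-1}\|(D+t_0)^\alpha v\|$ are uniformly comparable for $t$ near $t_0$. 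Feeding these facts into the formula for $\widetilde T_t-\widetilde T_{t_0}$ and invoking the bound just obtained, one gets a function $\delta(t)\to0$ as $t\to t_0$ with
\[
\|(\widetilde T_t-\widetilde T_{t_0})v\|\ \le\ \delta(t)\bigl(\|\widetilde T_{t_0}v\|+\|v\|\bigr),\qquad v\in\mathcal D.
\]
From this the gap bound follows by hand: with $\delta_k:=\delta(t_k)<\tfrac12$, for $w=(v,\widetilde T_{t_k}v)$ on the unit sphere of $G(\widetilde T_{t_k})$ the point $(v,\widetilde T_{t_0}v)\in G(\widetilde T_{t_0})$ lies within $\delta_k\bigl(\|\widetilde T_{t_0}v\|+\|v\|\bigr)$ of $w$, and the relative bound lets one replace $\|\widetilde T_{t_0}v\|+\|v\|$ by $(1-\delta_k)^{-1}\bigl(\|\widetilde T_{t_k}v\|+\|v\|\bigr)\le\sqrt2\,(1-\delta_k)^{-1}$; the reverse estimate is easier, so $g(\widetilde T_{t_k},\widetilde T_{t_0})\le\sqrt2\,(1-\delta_k)^{-1}\delta_k\to0$.

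For step (iii), since $T_t=U_t\widetilde T_tU_t^{-1}$ we have $G(T_t)=(U_t\oplus U_t)\,G(\widetilde T_t)$, with $U_t\oplus U_t$ unitary on $L_2^m(F)\oplus L_2^m(F)$. Given $w$ on the unit sphere of $G(T_{t_k})$, write $w=(U_{t_k}\oplus U_{t_k})w_0$ with $w_0$ on the unit sphere of $G(\widetilde T_{t_k})$ and choose $z_0\in G(\widetilde T_{t_0})$ as above (so $\|w_0-z_0\|\le C\delta_k$ and $\|z_0\|\le C$ for $k$ large); then
\[
\operatorname{dist}\bigl(w,G(T_{t_0})\bigr)\ \le\ \|w_0-z_0\|+\|U_{t_k}-U_{t_0}\|\,\|z_0\|\ \le\ C\delta_k+C\,|t_k-t_0|\sup_{x\in F}|x|\ \longrightarrow\ 0,
\]
uniformly in such $w$, and likewise with the indices interchanged. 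Hence $g(T_{t_k},T_{t_0})\to0$, so $\{T_t:t\in F^\ast\}$ is continuous in the sense of Definition 2 and Theorems 1--3 apply to it.

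I expect the substantive obstacle to be conceptual rather than computational: the $t$-dependent domains forbid a direct comparison of the $T_t$, and this is what forces the gauge transformation — which concentrates the whole $t$-dependence into lower-order coefficients (tending to $0$) and into the uniformly bounded, norm-continuous term $\widetilde B_t$ — and it is exactly at that point that hypothesis (11) is used, supplying the relative bound that keeps the perturbation small uniformly in $t$. The boundedness of $F$ (a harmless normalization) is what makes $t\mapsto U_t$ norm-continuous, and this is needed both to control $\widetilde B_t-\widetilde B_{t_0}$ and to carry out the transfer in step (iii).
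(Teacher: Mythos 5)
Your proof is correct and is essentially the paper's argument in different packaging: both rest on multiplying by the factor $e^{i\left\langle t-t_{0},x\right\rangle }$ to pass between the $t$-dependent domains and on using (11) to absorb the resulting lower-order remainder (whose coefficients are $O(\left\vert t-t_{0}\right\vert )$) into the graph norm. The paper does this in a single direct step --- for $v=(u,A_{t_{0}}u)$ on the unit sphere of the graph it takes $w=(e^{i\left\langle t_{n}-t_{0},x\right\rangle }u,\,A_{t_{n}}e^{i\left\langle t_{n}-t_{0},x\right\rangle }u)\in G(A_{t_{n}})$ and estimates $\left\Vert w-v\right\Vert \leq C\left\vert t_{n}-t_{0}\right\vert $ --- whereas you first conjugate to a fixed periodic domain, prove a relative bound there, and then transfer back via the norm-continuity of $U_{t}$; the extra term $\left\Vert U_{t_{k}}-U_{t_{0}}\right\Vert \left\Vert z_{0}\right\Vert $ in your transfer step plays exactly the role of the terms $\left\Vert e^{i\left\langle t_{n}-t_{0},x\right\rangle }u-u\right\Vert $ and $\left\Vert (e^{i\left\langle t_{n}-t_{0},x\right\rangle }-1)A_{t_{0}}u\right\Vert $ in the paper's estimates (14)--(15).
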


\begin{proof}
For the simplicity of the notation, denote $T_{t}+cI-B$ by $A_{t}$. By Theorem
2.23 (c) of [2, Chap. 4, page 206], it is enough to prove that, the sequence
$\left\{  A_{t_{n}}\right\}  $ converges to $A_{t_{0}}$ in the generalized
sense, for any sequence $\left\{  t_{n}\right\}  \subset F^{\ast}$ converging
to $t_{0}$. If $u\in D(A_{t_{0}})$, then $e^{i\left\langle t_{n}%
-t_{0},x\right\rangle }u\in D(A_{t_{n}})$. By the product rule of
differentiation, we have
\begin{equation}
A_{t_{n}}e^{i\left\langle t_{n}-t_{0},x\right\rangle }u=e^{i\left\langle
t_{n}-t_{0},x\right\rangle }A_{t_{0}}u+\left\vert t_{n}-t_{0}\right\vert Su,
\tag{12}%
\end{equation}
where $e^{i\left\langle t_{n}-t_{0},x\right\rangle }A_{t_{0}}u$ is the sum of
the terms of $A_{t_{n}}e^{i\left\langle t_{n}-t_{0},x\right\rangle }u$ for
which no differentiation is applied to $e^{i\left\langle t_{n}-t_{0}%
,x\right\rangle }$ and $\left\vert t_{n}-t_{0}\right\vert Su$ is the sum of
the other terms of $A_{t_{n}}e^{i\left\langle t_{n}-t_{0},x\right\rangle }%
u,$\ that is, is the sum of the terms of $A_{t_{n}}e^{i\left\langle
t_{n}-t_{0},x\right\rangle }u$ for which some differentiations are applied to
$e^{i\left\langle t_{n}-t_{0},x\right\rangle }$. It is clear that, $Su$ is the
differential expression of order $\leq2s-1$, with bounded coefficients.
Therefore, there exists $M$ such that
\begin{equation}
\left\Vert Su\right\Vert <M%
{\textstyle\sum\limits_{\left\vert \alpha\right\vert \leq2s-1}}
\left\Vert D_{\alpha}u\right\Vert . \tag{13}%
\end{equation}
Now, using (11)-(13), (5) and Definition 1, we prove that $\left\{  A_{t_{n}%
}\right\}  $ converges to $A_{t_{0}}$ in the generalized sense. Let $S_{t}$ be
the unit sphere of $G\left(  A_{t}\right)  $. If $v=(u,A_{t_{0}}u)\in
S_{t_{0}}$, then $w=(e^{i\left\langle t_{n}-t_{0},x\right\rangle }u,A_{t_{n}%
}e^{i\left\langle t_{n}-t_{0},x\right\rangle }u)\in G(A_{t_{n}})$ and
\begin{equation}
\left\Vert w-v\right\Vert \leq\left\Vert e^{i\left\langle t_{n}-t_{0}%
,x\right\rangle }u-u\right\Vert +\left\Vert A_{t_{n}}e^{i\left\langle
t_{n}-t_{0},x\right\rangle }u-A_{t_{0}}u\right\Vert . \tag{14}%
\end{equation}
For the second term of (14), we have
\begin{equation}
\left\Vert A_{t_{n}}e^{i\left\langle t_{n}-t_{0},x\right\rangle }u-A_{t_{0}%
}u\right\Vert \leq\left\Vert A_{t_{n}}e^{i\left\langle t_{n}-t_{0}%
,x\right\rangle }u-e^{i\left\langle t_{n}-t_{0},x\right\rangle }A_{t_{0}%
}u\right\Vert +\left\Vert \left(  e^{i\left\langle t_{n}-t_{0},x\right\rangle
}-1\right)  A_{t_{0}}u\right\Vert . \tag{15}%
\end{equation}
On the other hand, using (12), (13) and (11), we obtain
\begin{equation}
\left\Vert A_{t_{n}}e^{i\left\langle t_{n}-t_{0},x\right\rangle }%
u-e^{i\left\langle t_{n}-t_{0},x\right\rangle }A_{t_{0}}u\right\Vert
=\left\vert t_{n}-t_{0}\right\vert \left\Vert Su\right\Vert <\left\vert
t_{n}-t_{0}\right\vert M\left(
{\textstyle\sum\limits_{\left\vert \alpha\right\vert \leq2s-1}}
\left\Vert D_{\alpha}u\right\Vert \right)  \leq\tag{16}%
\end{equation}%
\[
\frac{\left\vert t_{n}-t_{0}\right\vert M}{\varepsilon}\left(  \left\Vert
A_{t_{0}}u\right\Vert +\left\Vert u\right\Vert \right)  .
\]
Moreover, the inclusion $(u,A_{t_{0}}u)\in S_{t_{0}}$ implies that $\left\Vert
A_{t_{0}}u\right\Vert +\left\Vert u\right\Vert <2$. This inequality, together
with (14)-(16) implies that there exists $C>0$ such that
\[
\sup_{v\in S_{t_{0}}}\left(  \inf_{w\in G(A_{t_{n}})}\left\Vert w-v\right\Vert
\right)  \leq C\left\vert t_{n}-t_{0}\right\vert .
\]
In the same way, we prove that
\[
\sup_{v\in S_{t_{n}}}\left(  \inf_{w\in G(A_{t_{0}})}\left\Vert w-v\right\Vert
\right)  \leq C\left\vert t_{n}-t_{0}\right\vert .
\]
Therefore by (5), $g(A_{t_{n}},A_{t_{o}})\rightarrow0$ as $n\rightarrow\infty
$. Hence, by Definition 2, $\left\{  T_{t}:t\in F^{\ast}\right\}  $ is a
continuous family in the generalized sense.
\end{proof}

By the standard estimation, one can find a large class of differential
operators for which (11) and (3) hold and hence Theorems 1-3 continue to hold.
For example, we find a class of the partial differential operators satisfying
(11) and (3) in the following manner. Let us write (1) in the form
\begin{equation}
Tu=Lu+Pu+Bu, \tag{17}%
\end{equation}
where
\[
Lu=%
{\textstyle\sum\limits_{\left\vert \alpha\right\vert =2s}}
Q_{\alpha}D_{\alpha}u,\text{ }Pu=%
{\textstyle\sum\limits_{\left\vert \alpha\right\vert \leq2s-1}}
Q_{\alpha}(x)u.
\]
First we note that, there exists $c_{1}>0$ for which
\begin{equation}
\left\Vert Pu\right\Vert \leq c_{1}%
{\textstyle\sum\limits_{\left\vert \alpha\right\vert \leq2s-1}}
\left\Vert D_{\alpha}u\right\Vert , \tag{18}%
\end{equation}
since the coefficients of the expression $Pu$ are matrices with bounded
entries. Here and in subsequent relations we denote by $c_{1},c_{2},...$ the
positive constants. Then, we find a condition on the main term $Lu$\ of (16)
for which, there exists $c>0$ such that
\begin{equation}
\left\Vert Lu+cu\right\Vert \geq\left(  c_{1}+\varepsilon\right)
{\textstyle\sum\limits_{\left\vert \alpha\right\vert \leq2s-1}}
\left\Vert D_{\alpha}u\right\Vert . \tag{19}%
\end{equation}
It is clear that (11) follows from (19) and (18). The condition on $L$, namely
on the principal symbol of (1) is the following. Suppose that
\begin{equation}
L=%
{\textstyle\sum\limits_{\left\vert \alpha\right\vert =2s}}
q_{\alpha}I_{m}D_{\alpha}, \tag{20}%
\end{equation}
where $I_{m}$ is $m\times m$ unit matrix, and there exists $c_{2}>0$ such
that
\begin{equation}%
{\textstyle\sum\limits_{\left\vert \alpha\right\vert =2s}}
q_{\alpha}\left(  \xi\right)  ^{\alpha}\geq c_{2}\left\vert \xi\right\vert
^{2s}, \tag{21}%
\end{equation}
for each $\xi\in\mathbb{R}^{d}$, where $q_{\alpha}\in\mathbb{R}$ and $\left(
\xi\right)  ^{\alpha}=\xi_{1}^{\alpha_{1}}\xi_{2}^{\alpha_{2}}...\xi
_{d}^{\alpha_{d}}$. Now we prove that if (20) and (21) hold, then there exists
$c>0$ such that (19) and hence (11) hold. Moreover, we prove that (3) also
holds and obtain the following consequence of Theorem 4.

\begin{corollary}
Suppose that $L$ has the form (20), inequality (21) holds, and that $Pu$ is a
formally self-adjoint differential expression. Then, the Bloch eigenvalues (4)
of the differential operator $T_{t}$ generated in $L_{2}^{m}(F)$ by (1) and
(2) continuously depend on $t\in F^{\ast}$. Moreover, if $\lambda_{n}(t_{0}%
)$\ is a simple eigenvalue, then the eigenvalues $\lambda_{n}(t)$ are simple
in some neighborhood of $t_{0}$ and the corresponding normalized
eigenfunctions $\Psi_{n,t}$ can be chosen so that $\left\Vert \Psi_{n,t}%
-\Psi_{n,t_{0}}\right\Vert \rightarrow0$ as $t\rightarrow t_{0}$.
\end{corollary}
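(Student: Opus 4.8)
The plan is to follow the route indicated just before the statement: first establish the elliptic estimate (19) -- which, together with (18), yields (11) -- and the spectral condition (3), and then invoke Theorem 4 in conjunction with Theorems 2 and 3.

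First, to obtain (19), I would pass to the Fourier description of the quasiperiodic functions. Every $u\in D(T_{t})$ satisfying (2) has an expansion $u(x)=\sum_{\gamma\in\Gamma}c_{\gamma}e^{i\langle\gamma+t,x\rangle}$ with $c_{\gamma}\in\mathbb{C}^{m}$, and $\{e^{i\langle\gamma+t,x\rangle}\}_{\gamma\in\Gamma}$ is an orthonormal system in $L_{2}^{m}(F)$ because the measure of $F$ is $1$. Hence $D_{\alpha}u=\sum_{\gamma}(\gamma+t)^{\alpha}c_{\gamma}e^{i\langle\gamma+t,x\rangle}$ and, by (20), $Lu+cu=\sum_{\gamma}\big(\sum_{|\alpha|=2s}q_{\alpha}(\gamma+t)^{\alpha}+c\big)c_{\gamma}e^{i\langle\gamma+t,x\rangle}$. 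Since $q_{\alpha}\in\mathbb{R}$ and $\gamma+t\in\mathbb{R}^{d}$, the bracket is a real number, and by (21) it is at least $c_{2}|\gamma+t|^{2s}+c>0$; therefore Parseval gives $\|Lu+cu\|^{2}\geq\sum_{\gamma}(c_{2}|\gamma+t|^{2s}+c)^{2}|c_{\gamma}|^{2}$. On the other hand, Cauchy--Schwarz over the finite index set $\{\alpha:|\alpha|\leq2s-1\}$, together with $|(\gamma+t)^{\alpha}|\leq|\gamma+t|^{|\alpha|}\leq(1+|\gamma+t|)^{2s-1}$, gives $\big(\sum_{|\alpha|\leq2s-1}\|D_{\alpha}u\|\big)^{2}\leq C_{3}\sum_{\gamma}(1+|\gamma+t|)^{2(2s-1)}|c_{\gamma}|^{2}$, with $C_{3}$ depending only on $d$ and $s$. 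Thus (19) reduces to the scalar inequality $(c_{2}\rho^{2s}+c)^{2}\geq(c_{1}+\varepsilon)^{2}C_{3}(1+\rho)^{2(2s-1)}$ for all $\rho\geq0$; the left-hand side grows like $\rho^{4s}$ and the right-hand side like $\rho^{4s-2}$, so this holds for all large $\rho$ regardless of $c$, and on the remaining bounded range of $\rho$ it is secured by taking $c$ sufficiently large. All the constants may be chosen independently of $t$, since $t$ ranges over the bounded set $F^{\star}$; hence (19) holds for all $u\in D(T_{t})$ and $t\in F^{\star}$, and combining it with (18) gives (11) for $A_{t}=T_{t}+cI-B=L_{t}+cI+P_{t}$.

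Next, to verify (3), note that the same Fourier picture shows $L_{t}$ to be self-adjoint and bounded below, with eigenvalues $\sum_{|\alpha|=2s}q_{\alpha}(\gamma+t)^{\alpha}$ (each of multiplicity $m$) tending to $+\infty$ by (21); its domain is the Sobolev space of order $2s$ of quasiperiodic functions, which embeds compactly into $L_{2}^{m}(F)$ by the Rellich theorem, so $L_{t}$ has compact resolvent. The estimate just obtained yields $\|P_{t}u\|\leq c_{1}\sum_{|\alpha|\leq2s-1}\|D_{\alpha}u\|\leq\frac{c_{1}}{c_{1}+\varepsilon}\|(L_{t}+cI)u\|$, so $P_{t}$ is $L_{t}$-bounded with relative bound $c_{1}/(c_{1}+\varepsilon)<1$, and $P_{t}$ is symmetric because $Pu$ is formally self-adjoint and all boundary terms vanish under (2). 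By the Kato--Rellich theorem, $A_{t}=L_{t}+cI+P_{t}$ is self-adjoint on $D(L_{t})$ and bounded below, and it retains a compact resolvent because $(L_{t}+cI)^{-1}$ is compact while $P_{t}(L_{t}+cI)^{-1}$ is bounded with norm less than $1$. Finally $T_{t}=A_{t}-cI+B$ is a bounded self-adjoint perturbation of $A_{t}$, so $T_{t}\in SBC(L_{2}^{m}(F))$, which is exactly (3).

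With (3) and (11) in hand, Theorem 4 applies: $\{T_{t}:t\in F^{\star}\}$ is a continuous family in the generalized sense, and Theorems 1--3 hold for it. Theorem 2 then gives the continuity on $F^{\star}$ of each band function $\lambda_{n}$, and Theorem 3 gives, at any $t_{0}$ where $\lambda_{n}(t_{0})$ is simple, both the simplicity of $\lambda_{n}(t)$ in a neighborhood of $t_{0}$ and a choice of normalized Bloch functions $\Psi_{n,t}$ with $\|\Psi_{n,t}-\Psi_{n,t_{0}}\|\to0$ as $t\to t_{0}$ -- precisely the assertions of the corollary. The one step calling for real work is the verification of (19): the Fourier reduction is automatic, but the elementary polynomial estimate must be carried out carefully and, above all, one must check that the constants can be chosen uniformly in $t\in F^{\star}$, which is where the boundedness of $F^{\star}$ is used; the rest is a routine application of the Kato--Rellich and Rellich theorems and of Theorems 1--4.
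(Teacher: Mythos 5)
Your proposal is correct and follows essentially the same route as the paper: reduce (19) to a scalar polynomial inequality via the Fourier expansion in the quasiperiodic exponentials, deduce (11) from (18)--(19), verify (3) by relative boundedness of $P_t$ with respect to $L_t+cI$ plus Kato--Rellich, and then invoke Theorem 4 together with Theorems 2--3. The only (harmless) deviations are that you obtain compactness of the resolvent from the factorization $(L_t+cI)^{-1}\bigl(I+P_t(L_t+cI)^{-1}\bigr)^{-1}$ and treat $B$ as a separate bounded self-adjoint perturbation, whereas the paper folds $B$ into the relative-bound estimate (27) and cites Theorem 3.17 of [2] with $\mu=ic_6$.
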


\begin{proof}
First we prove the validity of (19) if (20) and (21) hold. Instead of (19), we
prove
\begin{equation}
\left\Vert Lu+cu\right\Vert ^{2}\geq\left(  c_{1}+\varepsilon\right)
^{2}\left(
{\textstyle\sum\limits_{\left\vert \alpha\right\vert \leq2s-1}}
\left\Vert D_{\alpha}u\right\Vert \right)  ^{2}, \tag{22}%
\end{equation}
for all $u\in L_{2}^{m}(F)$ such that $\partial^{\alpha}u\in L_{2}^{m}(F)$,
for $\left\vert \alpha\right\vert \leq2s$. It follows from (20) and (21) that
\[
\left\Vert L\left(  e^{i\left\langle \gamma,x\right\rangle }e_{k}\right)
+ce_{k}e^{i\left\langle \gamma,x\right\rangle }\right\Vert ^{2}\geq\left(
c_{2}\left\vert \gamma\right\vert ^{2s}+c\right)  ^{2},
\]
where $\{e_{1},e_{2},...,e_{m}\}$ is the standard basis of $\mathbb{R}^{m}$
and $\gamma\in\Gamma$. Using this and the decomposition%
\begin{equation}
u=%
{\textstyle\sum\limits_{\gamma\in\Gamma}}
\left(
{\textstyle\sum\limits_{k=1,2,...,m}}
u_{\gamma,k}e^{i\left\langle \gamma,x\right\rangle }e_{k}\right)  \tag{23}%
\end{equation}
of$\ u$ by the orthonormal basis $\left\{  e^{i\left\langle \gamma
,x\right\rangle }e_{k}:\gamma\in\Gamma,\text{ }k=1,2,...,m\right\}  $, we
obtain that
\begin{equation}
\left\Vert \left(  Lu\right)  +cu\right\Vert ^{2}\geq%
{\textstyle\sum\limits_{\gamma\in\Gamma}}
\left(
{\textstyle\sum\limits_{k=1,2,...,m}}
\left\vert u_{\gamma,k}\left(  c_{2}\left\vert \gamma\right\vert
^{2s}+c\right)  \right\vert ^{2}\right)  . \tag{24}%
\end{equation}
On the other hand, using (23) we have%
\begin{equation}
\left(
{\textstyle\sum\limits_{\left\vert \alpha\right\vert \leq2s-1}}
\left\Vert D_{\alpha}u\right\Vert \right)  ^{2}\leq c_{3}%
{\textstyle\sum\limits_{\left\vert \alpha\right\vert \leq2s-1}}
\left\Vert D_{\alpha}u\right\Vert ^{2}=c_{3}%
{\textstyle\sum\limits_{\gamma\in\Gamma}}
\left(
{\textstyle\sum\limits_{k=1,2,...,m}}
\left\vert u_{\gamma,k}%
{\textstyle\sum\limits_{\left\vert \alpha\right\vert \leq2s-1}}
\left\vert \gamma\right\vert ^{\alpha}\right\vert ^{2}\right)  . \tag{25}%
\end{equation}
Therefore, (22) follows from (24) and (25), if we prove that there exists
$c>0$ such that%
\begin{equation}
c_{2}\left\vert \gamma\right\vert ^{2s}+c>\sqrt{c_{3}}\left(  c_{1}%
+\varepsilon\right)
{\textstyle\sum\limits_{\left\vert \alpha\right\vert \leq2s-1}}
\left\vert \gamma\right\vert ^{\alpha}, \tag{26}%
\end{equation}
for all $\gamma\in\Gamma$. Hence, it remains to prove (26). Clearly, there
exists $c_{4}$ such that $c_{2}\left\vert \gamma\right\vert ^{2s}$ is greater
than the right-hand side of (26), for $\left\vert \gamma\right\vert >c_{4}$.
Besides, the number of $\gamma\in\Gamma$ satisfying $\left\vert \gamma
\right\vert \leq c_{4}$ is finite. Therefore, there exists $c>0$ such that
(26) holds, for $\left\vert \gamma\right\vert \leq c_{4}$. Thus, (26) and
hence (22), (19) and (11) hold.

Now, we prove that (3) holds, too. Let $L_{t}$ and $P_{t}$ be respectively the
operators generated by the differential expression $Lu$ and $Pu$ and boundary
conditions (2). Since the orthonormal basis $\left\{  e^{i\left\langle
\gamma+t,x\right\rangle }e_{k}:\gamma\in\Gamma,k=1,2,...,m\right\}  $ of
$L_{2}^{m}(F)$ is the set of the eigenfunctions of $L_{t}$ and all eigenvalues
of $L_{t}$ are nonnegative numbers, we have $L_{t}\in SBC(L_{2}^{m}(F))$.

Now, we prove that $T_{t}\in SBC(L_{2}^{m}(F))$, that is, (3) holds. It
readily follows from the proof of (26) that, there exists $c>0$ such that%
\[
c_{2}\left\vert \gamma\right\vert ^{2s}+c>2\sqrt{c_{3}}\left(  c_{1}%
+\varepsilon\right)
{\textstyle\sum\limits_{\left\vert \alpha\right\vert \leq2s-1}}
\left\vert \gamma\right\vert ^{\alpha}.
\]
Therefore, arguing as in the proof of (19), we see that, there exist $c_{5}$
such that%
\begin{equation}
\left\Vert Pu+Bu\right\Vert \leq c_{5}\left\Vert u\right\Vert +\tfrac{1}%
{2}\left\Vert Lu+cu\right\Vert . \tag{27}%
\end{equation}
It shows that, $P_{t}+B$ is relatively bounded with respect to $L_{t}+cI$ with
relative bound smaller than $1$. Therefore, by Theorem 4.11 of [2] (Chap.5,
page 291), $T_{t}+cI=L_{t}+cI+P_{t}$ $+B$ is self-adjoint and bounded from
below.\ It remains to prove that $T_{t}+cI$ has a compact resolvent. For this,
we use Theorem 3.17 of [2, Chap. 4, page 214] which implies that, if (27)
holds and there exists $\mu\in\rho(L+cI)$ such that
\begin{equation}
c_{5}\left\Vert \left(  L_{t}+cI-\mu I\right)  ^{-1}\right\Vert +\frac{1}%
{2}\left\Vert \left(  L_{t}+cI\right)  \left(  L_{t}+cI-\mu I\right)
^{-1}\right\Vert <1, \tag{28}%
\end{equation}
then $T_{t}+cI$ has a compact resolvent. Since $L_{t}+cI$ is a self-adjoint
operator, it is clear and well-known that (see, for example (4.9) of [2, Chap.
5, page 290]), if $\mu=ic_{6},$ then%
\[
\left\Vert \left(  L_{t}+cI-\mu I\right)  ^{-1}\right\Vert \leq\frac{1}{c_{6}%
},\text{ }\left\Vert \left(  L_{t}+cI\right)  \left(  L_{t}+cI-\mu I\right)
^{-1}\right\Vert \leq1.
\]
Thus, if $c_{6}>2c_{5}$, then (28) holds. Hence, $T_{t}+cI$ is a below-bounded
self-adjoint operator with a compact resolvent and (3) holds. Now, the proof
follows from Theorem 4.
\end{proof}

Now we show that in some cases, instead of (10), one can constructively define
normalized eigenfunctions $\Psi_{n,t}$ that continuously depend on $t.$ First
of all, let us note the following obvious statement. If $\lambda_{n}(t)$ is a
simple eigenvalue, then the set of all normalized eigenfunctions corresponding
to $\lambda_{n}(t)$ is $\left\{  e^{i\alpha}\Psi_{n,t}:\alpha\in\lbrack
0,2\pi)\right\}  ,$ where $\Psi_{n,t}$ is a fixed normalized eigenfunction. If
the eigenvalue $\lambda_{n}(a)$ is simple, then there exists a neighborhood
$U(a)$ of $a$ such that for $t\in U(a)$ the eigenvalue $\lambda_{n}(t)$ is
also simple and the equality
\begin{equation}
\int_{\gamma}(T_{t}-\lambda I)^{-1}e^{i(a,x)}d\lambda=(e^{i(a,x)},e^{i\alpha
}\Psi_{n,t})e^{i\alpha}\Psi_{n,t}=(e^{i(a,x)},\Psi_{n,t})\Psi_{n,t} \tag{29}%
\end{equation}
is true for any choice of the normalized eigenfunction $\Psi_{n,t},$ where
$\gamma$ is a closed curve that encloses only the eigenvalue $\lambda_{n}(t).$
Since the projection operator onto the subspace corresponding to the
eigenvalue $\lambda_{n}(t)$ continuously depends on $t\in U(a)$ and the\ norm
is a continuous function, it follows from (29) that $\left\vert (\Psi
_{n,t},e^{i(a,x)})\right\vert $ is also continuous function with respect to
$t$ in $U(a)$ for any normalized eigenfunction $\Psi_{n,t}.$ This with the
inequality
\begin{equation}
\left\vert \left\vert (\Psi_{n,t},e^{i(t,x)})\right\vert -\left\vert
(\Psi_{n,t},e^{i(a,x)})\right\vert \right\vert \leq\left\vert (\Psi
_{n,t},e^{i(t,x)})-(\Psi_{n,t},e^{i(a,x)})\right\vert \leq\left\Vert
e^{i(t,x)}-e^{i(a,x)})\right\Vert \tag{30}%
\end{equation}
implies the following obvious statement.

\begin{proposition}
If $\lambda_{n}(a)$ is a simple eigenvalue, then the function $\left\vert
(\Psi_{n,t},e^{i(t,x)})\right\vert $ does not depend on choice of the
normalized eigenfunction $\Psi_{n,t}$ and is continuous in some neighborhood
$U(a)$ of $a$, where $U(a)\subset F^{\ast}$ and any set $E$ satisfying the conditions:

$(a)$ $\left\{  \gamma+t:t\in E,\text{ }\gamma\in\Gamma\right\}
=\mathbb{R}^{d}$ and $(b)$\ if $t\in E,$ then $\gamma+t\notin E$ for any
$\gamma\in\Gamma\backslash\left\{  0\right\}  $ can be used as the fundamental
domain $F^{\star}$ of $\Gamma.$
\end{proposition}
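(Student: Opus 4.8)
The plan is to read off the three assertions from the material already assembled in the paragraph preceding the statement, supplying the routine details. Since $\lambda_{n}(a)$ is simple, Theorem 3 furnishes a neighbourhood $U(a)$ of $a$, which we take inside $F^{\star}$, on which every eigenvalue $\lambda_{n}(t)$ is simple; there the set of normalized eigenfunctions of $T_{t}$ corresponding to $\lambda_{n}(t)$ is exactly $\{e^{i\alpha}\Psi_{n,t}:\alpha\in[0,2\pi)\}$. Replacing $\Psi_{n,t}$ by $e^{i\alpha}\Psi_{n,t}$ multiplies $(\Psi_{n,t},e^{i(t,x)})$ by a factor of modulus $1$, so $|(\Psi_{n,t},e^{i(t,x)})|$ is unchanged; this is the asserted independence of the choice of eigenfunction.

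For the continuity, fix an arbitrary $b\in U(a)$. Since $\lambda_{n}(b)$ is simple, the computation that produced (29) applies with $b$ in place of $a$ and gives $\int_{\gamma}(T_{t}-\lambda I)^{-1}e^{i(b,x)}\,d\lambda=(e^{i(b,x)},\Psi_{n,t})\Psi_{n,t}$ for $t$ in a neighbourhood $U(b)\subset U(a)$, where $\gamma$ encloses only $\lambda_{n}(t)$. Since $\{T_{t}:t\in F^{\star}\}$ is a continuous family in the generalized sense (Theorem 4), by (6) and Theorem 3.16 of [2] the left-hand side is continuous in $t$ on $U(b)$; taking norms and using $\|\Psi_{n,t}\|=1$ shows that $t\mapsto|(\Psi_{n,t},e^{i(b,x)})|$ is continuous on $U(b)$. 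Using (30) with $b$ in place of $a$ we have $\big|\,|(\Psi_{n,t},e^{i(t,x)})|-|(\Psi_{n,t},e^{i(b,x)})|\,\big|\le\|e^{i(t,x)}-e^{i(b,x)}\|\to0$ as $t\to b$; combining this with the previous statement by adding and subtracting $|(\Psi_{n,t},e^{i(b,x)})|$ yields $|(\Psi_{n,t},e^{i(t,x)})|\to|(\Psi_{n,b},e^{i(b,x)})|$ as $t\to b$. Since $b\in U(a)$ was arbitrary, $t\mapsto|(\Psi_{n,t},e^{i(t,x)})|$ is continuous on $U(a)$.

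Finally, conditions $(a)$ and $(b)$ say precisely that the $\Gamma$-translates of $E$ cover $\mathbb{R}^{d}$ and that $E$ meets each coset of $\mathbb{R}^{d}$ modulo $\Gamma$ at most once, i.e. $E$ is a set of coset representatives whose translates tile $\mathbb{R}^{d}$; this is exactly the defining property of a fundamental domain of $\Gamma$. Nothing in the argument just described uses the geometry of $F^{\star}$ beyond the fact that the quasimomenta range over it, so the statement holds verbatim for any such $E$ in place of $F^{\star}$.

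I expect no real obstacle. The one point needing slightly more care than the preceding paragraph provides is that continuity is claimed on all of $U(a)$, not merely at the single point $a$: this forces one to re-run the projection-continuity step and the estimate (30) at each base point $b\in U(a)$ and then apply the triangle inequality, and to take $U(a)$ (via Theorem 3) small enough that $\lambda_{n}(t)$ stays simple throughout — so that (29) is available at each such $b$ — and with $U(a)\subset F^{\star}$.
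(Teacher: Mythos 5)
Your proof follows the paper's own argument essentially verbatim: independence of the choice of $\Psi_{n,t}$ from the phase-invariance of the modulus, continuity of $|(\Psi_{n,t},e^{i(a,x)})|$ from the norm of the spectral projection in (29) via the generalized continuity of $\{T_{t}\}$, and the transfer to $|(\Psi_{n,t},e^{i(t,x)})|$ via (30). Your one refinement --- re-running the projection step and (30) at each base point $b\in U(a)$ so that continuity holds on all of $U(a)$ rather than only at $a$ --- is a legitimate tightening of a point the paper leaves implicit, and is correct.
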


If, in addition, there exist $\varepsilon>0$ such that%
\begin{equation}
\left\vert (\Psi_{n,t},e^{i(t,x)})\right\vert >\varepsilon\tag{31}%
\end{equation}
for all $t\in U(a)$ and the normalized eigenfunction $\Psi_{n,t}$ is chosen so
that
\begin{equation}
\arg(\Psi_{n,t}(x),e^{i(t,x)})=0,\tag{32}%
\end{equation}
then $(\Psi_{n,t},e^{i(t,x)})=\left\vert (\Psi_{n,t},e^{i(t,x)})\right\vert ,$
and hence by Proposition 1, $(\Psi_{n,t},e^{i(t,x)})$ continuously depends on
$t$ in some neighborhood of $a.$ This with (30) and (31) implies that%
\[
\frac{(\Psi_{n,t}(x),e^{i(a,x)})}{(\Psi_{n,a}(x),e^{i(a,x)})}=:\alpha
(t)\rightarrow1
\]
as $t\rightarrow a.$ Therefore, using (31) and the continuity of the right
side of (29) we obtain
\[
\left\Vert (e^{i(a,x)},\Psi_{n,t})\Psi_{n,t}-(e^{i(a,x)},\Psi_{n,a})\Psi
_{n,a}\right\Vert \rightarrow0
\]
and $\left\Vert \alpha(t))\Psi_{n,t}-\Psi_{n,a}\right\Vert \rightarrow0$ as
$t\rightarrow a.$ Thus we have
\[
\left\Vert \Psi_{n,t}-\Psi_{n,a}\right\Vert \leq\left\Vert (1-\alpha
(t))\Psi_{n,t}\right\Vert +\left\Vert \alpha(t))\Psi_{n,t}-\Psi_{n,a}%
\right\Vert \rightarrow0
\]
as $t\rightarrow a.$ In other words, the following statement is proved.

\begin{proposition}
If $\lambda_{n}(a)$ is a simple eigenvalue and (31) holds, then the normalized
eigenfunction $\Psi_{n,t}$ satisfying (32) continuously depends on $t$ in some
neighborhood of $a.$
\end{proposition}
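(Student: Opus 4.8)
The plan is to make rigorous the computation sketched in the paragraph preceding the statement. Fix a neighborhood $U(a) \subset F^{\ast}$ on which $\lambda_n(\cdot)$ is simple (such a neighborhood exists, as noted before Proposition 1, by Theorem 3 applied to the family $\{T_t\}$). By Proposition 1 the function $\beta(t) := |(\Psi_{n,t}, e^{i(t,x)})|$ is continuous on $U(a)$ and does not depend on the choice of normalized eigenfunction, and by hypothesis (31) we have $\beta(t) > \varepsilon$ there. The first — and essentially only delicate — step is to upgrade continuity of the modulus to continuity of the complex scalar $(\Psi_{n,t}, e^{i(t,x)})$ itself: once the phase of $\Psi_{n,t}$ is fixed by the normalization (32), one has $(\Psi_{n,t}, e^{i(t,x)}) = \beta(t) > \varepsilon$, so $t \mapsto (\Psi_{n,t}, e^{i(t,x)})$ is continuous and bounded away from zero on $U(a)$.

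Next I would pass from the inner product against $e^{i(t,x)}$ to the inner product against the fixed vector $e^{i(a,x)}$. Since $t \mapsto e^{i(t,x)}$ is norm-continuous in $L_2^m(F)$, the bound (30) gives $(\Psi_{n,t}, e^{i(a,x)}) \to (\Psi_{n,a}, e^{i(a,x)})$ as $t \to a$, and the limit has modulus $\beta(a) > \varepsilon \neq 0$. Hence the ratio $\alpha(t) := (\Psi_{n,t}, e^{i(a,x)})/(\Psi_{n,a}, e^{i(a,x)})$ is well defined for $t$ near $a$ and $\alpha(t) \to 1$.

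Now I invoke the continuity of the Riesz projection. By (29), for $t$ in $U(a)$ one has $\int_{\gamma}(T_t - \lambda I)^{-1} e^{i(a,x)} d\lambda = (e^{i(a,x)}, \Psi_{n,t})\Psi_{n,t}$, where $\gamma$ encloses only $\lambda_n(t)$; and the left-hand side depends continuously on $t$ by (6) together with the generalized convergence of the family $\{T_t\}$ (Theorem 4, whose hypotheses hold here by the proof of Corollary 1). Therefore $(e^{i(a,x)}, \Psi_{n,t})\Psi_{n,t} \to (e^{i(a,x)}, \Psi_{n,a})\Psi_{n,a}$; dividing by the nonzero scalar $(e^{i(a,x)}, \Psi_{n,a})$ and using $\alpha(t) \to 1$ (the complex conjugate that appears is immaterial in the limit since $\|\Psi_{n,t}\| = 1$), I get $\|\alpha(t)\Psi_{n,t} - \Psi_{n,a}\| \to 0$. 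Finally, writing $\Psi_{n,t} - \Psi_{n,a} = (1 - \alpha(t))\Psi_{n,t} + (\alpha(t)\Psi_{n,t} - \Psi_{n,a})$ and taking norms yields $\|\Psi_{n,t} - \Psi_{n,a}\| \le |1 - \alpha(t)| + \|\alpha(t)\Psi_{n,t} - \Psi_{n,a}\| \to 0$, which is the assertion.

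The one genuinely nontrivial point is the passage in the first paragraph from continuity of $|(\Psi_{n,t}, e^{i(t,x)})|$ (which is all Proposition 1 supplies) to continuity of $(\Psi_{n,t}, e^{i(t,x)})$: this is exactly what the normalization (32) buys us — it removes the free phase $e^{i\alpha}$ of the eigenfunction — and it is legitimate precisely because (31) keeps the inner product away from $0$, so the phase is genuinely determined and one may divide by the relevant scalars. Everything else is the triangle inequality together with the continuity of the spectral projection already established for $\{T_t\}$.
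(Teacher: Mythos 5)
Your argument is correct and follows essentially the same route as the paper: normalization (32) together with Proposition 1 upgrades continuity of the modulus to continuity of the scalar $(\Psi_{n,t},e^{i(t,x)})$, inequality (30) transfers this to $(\Psi_{n,t},e^{i(a,x)})$ so that $\alpha(t)\rightarrow1$, and the continuity of the Riesz projection in (29) plus the triangle inequality finishes the proof. Your explicit remark that the complex conjugate arising when dividing by $(e^{i(a,x)},\Psi_{n,a})$ is harmless is a small refinement the paper leaves implicit, but it does not change the argument.
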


\begin{remark}
Note that the constructive choice (32) is also used in [6]. Namely in [6] for
the case when the right side of (1) is equal to $-\Delta+q$ and $m=1$ (for the
Schr\"{o}dinger operator $L$) in the neighborhood of the sphere $\left\{
t\in\mathbb{R}^{d}:\left\vert t\right\vert =\rho\right\}  ,$ where $\rho$ is a
large number, I constructed a set $B$ such that if $t\in B,$ then there exists
unique eigenvalue $\lambda_{n(t)}(t)$ that is simple and close to $\left\vert
t\right\vert ^{2}$ and the corresponding normalized eigenfunction $\Psi_{n,t}$
satisfies the asymptotic formula
\begin{equation}
\left\vert (\Psi_{n,t},e^{i(t,x)})\right\vert ^{2}=1+O(\rho^{-\delta}%
)>\tfrac{1}{2} \tag{33}%
\end{equation}
for some $\delta>0.$ Moreover, the normalized eigenfunction was chosen so that
(32) holds (see [6] page 55). In [6] the choice (32) was made in order to
write (33) in an elegant form $\Psi_{n,t}=e^{i(t,x)}+O(\rho^{-\delta})$ .
However, in Proposition 2 we show that the choice (32) ensures the continuity
of $\Psi_{n,t}.$ Note that Proposition 2 is also new for the Schr\"{o}dinger
operator. However, Proposition 1 for the Schr\"{o}dinger operator is clear,
since it follows directly from the continuities of the function $e^{i(t,x)},$
the projection operator, and the norm. Proposition 1 and (33) were used to
prove that $n(t)=n(a)$ for all $t\in U(a)$ (see (5.11) from [6]), where
$U(a)\subset B$ and satisfies condition $(b)$ (see Lemma 5.1$(b)$ from [6] and
Proposition 1), i.e., $U(a)\subset\left(  B\cap F^{\star}\right)  $ for some
fundamental domain $F^{\ast}.$
\end{remark}

\begin{acknowledgement}
The work was supported by the Scientific and Technological Research Council of
Turkey (Tubitak, project No. 119F054).
\end{acknowledgement}


\begin{thebibliography}{9}                                                                                                %


\bibitem {}M. S. P. Eastham, \textit{The Spectral Theory of Periodic
Differential Equations}. Edinburg. Scottish Acedemic Press 1973.

\bibitem {}T. Kato, \textit{Perturbation Theory for Linear Operators},
Springer-Verlag, Berlin, 1980.

\bibitem {}M. Reed, B. Simon,\textit{ Methods of Modern Mathematical Physics},
Volume 4, Academic Press, New York, 1987.

\bibitem {}M. M. Skriganov, \textit{Geometrical and Arithmetical Methods in
the Spectral Theory \ of the Multidimensional Periodic Operators}, Proc.
Steklov Math. Inst., 171, 1984.

\bibitem {}O. A. Veliev, \textit{Multidimensional Periodic Schr\"{o}dinger
Operator}, Springer, Switzerland, 2019.

\bibitem {}O. A. Veliev, \textit{Perturbation theory for the periodic
multidimensional Schr\"{o}dinger operator and the Bethe-Sommerfeld
Conjecture}, International Journal of Contemporary Math. Sciences, Vol.2,
no.2, 19 (2007).
\end{thebibliography}
\end{document}